\DeclareMathOperator{\sgn}{sgn}
\newtheorem{definition}{Definition}[section]
\newtheorem{theorem}{Theorem}[section]
\newtheorem{lemma}{Lemma}[section]
\newtheorem{remark}{Remark}[section]
\newtheorem*{maintheorem*}{Main Theorem}
\numberwithin{equation}{section}
\renewcommand{\i}{\ifmmode\mathit{\mathchar"7010 }\else\char"10 \fi}
\renewcommand{\j}{\ifmmode\mathit{\mathchar"7011 }\else\char"11 \fi}
\newcommand{\R}{\mathbb{R}}
\newcommand{\N}{\mathbb{N}}
\newcommand{\abs}[1]{\left|#1\right|}
\newcommand{\modulo}[1]{\left|#1\right|}
\newcommand{\norm}[1]{\left\|#1\right\|}
\newcommand{\norma}[1]{{\left\|#1\right\|}}
\newcommand{\pt}{\partial_t}
\newcommand{\px}{\partial_x}
\newcommand{\pxx}{\partial_{xx}^2}
\newcommand{\pxxx}{\partial_{xxx}^3}
\newcommand{\ptx}{\partial_{tx}^2}
\newcommand{\Cc}[1]{\mathbf{C_c^{#1}}}
\newcommand{\vfi}{\varphi}
\newcommand{\eps}{\varepsilon}
\newcommand{\epk}{{\varepsilon_k}}
\newcommand{\re}{\rho_\varepsilon}
\newcommand{\ve}{\mathfrak{v_\varepsilon}}
\begin{document}\large

\title[Non-local discontinuous flux]{Non-local scalar conservation laws\\ with discontinuous flux}
\date{\today}

\author{F. A. Chiarello} 
\address[Felisia Angela Chiarello]{\newline Department of Mathematical Sciences “G. L. Lagrange”, Politecnico di Torino, Corso Duca degli Abruzzi 24, 10129 Torino, Italy.}
\email[]{felisia.chiarello@polito.it}

\author{G. M. Coclite}
\address[Giuseppe Maria Coclite]{\newline
Department of Mechanics, Mathematics and Management, Polytechnic University of Bari, Via E.~Orabona 4, 70125 Bari, Italy}
\email[]{giuseppemaria.coclite@poliba.it}

\subjclass[2010]{35L65, 35R09, 90B20}

\keywords{Conservation laws, Traffic models, Entropy solutions, Discontinuous flux, Non-local problem}

\thanks{The authors wish to thank Paola Goatin for her suggestions and valuable discussions.\\GMC is member of the Gruppo Nazionale per l'Analisi Matematica, 
la Probabilit\`a e le loro Applicazioni (GNAMPA) of the Istituto Nazionale di Alta Matematica (INdAM). \\	
FAC acknowledges support from “Compagnia di San Paolo” (Torino, Italy)}

\begin{abstract}
We prove the well-posedness of entropy weak solutions for a class of space-discontinuous scalar conservation laws with non-local flux arising in traffic modeling. We approximate the problem adding a viscosity term and we provide $L^\infty$ and BV estimates for the approximate solutions. We use the doubling of variable technique to prove the stability with respect to the initial data from the entropy condition. 
\end{abstract}

\maketitle
\section{Introduction}
\label{sec:introduction}
The first macroscopic traffic flow model, based on fluid-dynamics equations, is the Lighthill, Whitham and Richards (LWR) model \cite{LighthillWhitham, Richards}. 
It consists in one scalar equation that expresses the conservation of the number of cars. One shortcoming of the LWR model is that does not match the experimental data because  its main assumption is that  the mean traffic velocity is a function of the traffic density, but this is not  true in real congested regimes.  Another limitation of this model is that allows for infinite acceleration of cars. For these reasons, \textit{non-local} versions of the LWR model have been proposed in \cite{BlandinGoatin2016, ChiarelloGoatin, SopasakisKatsoulakis2006}. In these models, the speed depends on a weighted mean of the downstream traffic density and, as a consequence, it becomes a Lipschitz function with respect to space and time variables, overcoming the limitation of classical macroscopic models that allows for speed discontinuities. 
Non-local traffic models describes the behaviour of drivers that adapt their velocity with respect to what happens to the cars in front of them. In particular, the flux function depends on a \textit{downstream} convolution term between the density of vehicles and a kernel function with support on the negative axis. See \cite{Chiarello} for an overview about non-local traffic models.
Recently, in \cite{chiarelloFriedrichGoatinGK}, the authors introduced a model focusing on a non-local mean downstream velocity, describing the behavior of drivers on two stretches of a road with different velocities and capacities, without violating the maximal density constraint on each road segment. It is worth underlying that in \cite{chiarelloFriedrichGoatinGK}, the authors consider a continuous flux function to model a 1-to-1 junction. On the contrary, in this manuscript we propose a non-local scalar space discontinuous model to describe the beaviour of drivers on two consecutive roads with different  speed limits. Traveling waves for this kind of model are studied in \cite{Shen2018}.
Here, we prove the well-posedness of a non-local space discontinuous  traffic model and our approach is based on a viscous regularizing approximation of the problem and standard compactness estimates.  The paper is organized as follows. 
In Section \ref{sec:MainResults}, we present a non-local discontinuous problem and describe the main results in this paper. In Section \ref{sec:exist}, we prove the existence of weak solutions of our problem, approximating it through a viscous problem and giving $L^\infty$ and $BV$ bounds. Finally, in Section \ref{sec:uniq}, we show the uniqueness of entropy solutions, deriving an $L^1$ contraction property using  a doubling of variables argument. 

\section{Main results} \label{sec:MainResults}
We consider the following scalar conservation equation with discontinuous non-local flux coupled with an initial datum
\begin{equation}
\label{eq:CL}
\begin{cases}
\pt \rho + \px f(t,x,\rho)=0, &\quad (t,x)\in (0,\infty)\times \R,\\ 
\rho(0,x)=\rho_0(x), &\quad x\in \R,
\end{cases}
\end{equation}
where
\begin{align*}
	&f(t,x,\rho)=\rho (1-w_\eta*\rho)\mathfrak{v}(x),\\
	&(w_\eta*\rho)(t,x)= \int_{x}^{x+\eta} \rho(t,y) w_\eta(y-x) dy,\qquad \eta>0,
\end{align*}
and  the velocity function $\mathfrak{v}=\mathfrak{v}(x)$ is defined as follows
\begin{equation*}
	\mathfrak{v}(x)=\begin{cases}
		v_l,& \quad \text{if $x<0$},\\
		v_r,& \quad \text{if $x>0$}.
	\end{cases}
\end{equation*}
In this context $\rho$ represents the density of vehicles on the roads, $\omega_\eta$ is a non-increasing kernel function whose support $\eta$ is proportional to the look-ahead distance of drivers, that are supposed to adapt their velocity with respect to the mean downstream traffic density.

The equation in \eqref{eq:CL} is a non-local version of the Lightill-Whitham-Richards traffic model \cite{GP,LighthillWhitham, Richards} with a discontinuous velocity field \cite{CR,KR}.

On $w_\eta,\, \mathfrak{v},\,\rho_0$ we shall assume that
\begin{align}
\label{ass:v} &0<v_l< v_r;\\
\label{ass:w} w_\eta \in C^{2}([0,\eta]),\qquad w_\eta(\eta)=w'_\eta(\eta)=0, \qquad 
&\omega'_\eta\le0\le w_\eta,\qquad\norm{w_\eta}_{L^1(0,\eta)}=1;\\
\label{ass:init} 0\le \rho_0\le 1,\qquad  &\rho_0\in L^1(\R)\cap BV(\R).
\end{align}

Assumption \eqref{ass:w} implies that, if $\rho$ is continuos,
\begin{equation}
\label{eq:w'}
\begin{split}
\px(\omega_\eta*\rho)(t,x)&= -(\omega_\eta' *\rho)(t,x)-\omega_\eta(0)\rho(t,x),\\
\px(\omega_\eta'*\rho)(t,x)&= -(\omega_\eta'' *\rho)(t,x)-\omega_\eta'(0)\rho(t,x).
\end{split}
\end{equation}

\begin{remark}
	Assumption \eqref{ass:v} is motivated by the Maximum Principle. Indeed, if \eqref{ass:v} does not hold, namely $v_l>v_r,$ 
we cannot say that 
$$\norm{\rho}_{L^\infty((0,\infty)\times\R)}\le\norm{\rho_0}_{L^\infty(\R)},$$ 
see \cite{ChiarelloVillada} for numerical evidences in the non-local case.
 Let us consider this very easy example in the classical case
\begin{align}\label{eq:counterex}
\begin{cases}
\partial_t \rho + \partial_x f(\rho)=0, &\quad (t,x)\in (0,\infty)\times (-\infty,0),\\ 
\partial_t \rho +  \partial_x g(\rho)=0, &\quad (t,x)\in (0,\infty)\times (0,\infty),\\
\rho(0,x)=\rho_0(x), &\quad x\in \R,
\end{cases}
\end{align}
where 
\begin{equation*} 
f(\rho)=2 (\rho (1-\rho)),\quad g(\rho)=\rho (1-\rho),\quad \rho_0(x)=
\begin{cases}
0.25,& \quad \text{if $x<0$},\\
0.77,& \quad \text{if $x>0.$}
\end{cases}
\end{equation*}
The entropy weak solution to the above Cauchy problem is
\begin{equation}
\rho=
\begin{cases}
\rho_l=0.25,& \quad \text{if $x<\frac{f(\rho_-)-f(\rho_l)}{\rho_--\rho_l}t,$}\\
\rho_-=0.9,& \quad \text{if $\frac{f(\rho_-)-f(\rho_l)}{\rho_--\rho_l}t<x<0,$}\\
\rho_+=\rho_r=0.77,& \quad \text{if $x>0.$}
\end{cases}
\end{equation}
\begin{center}
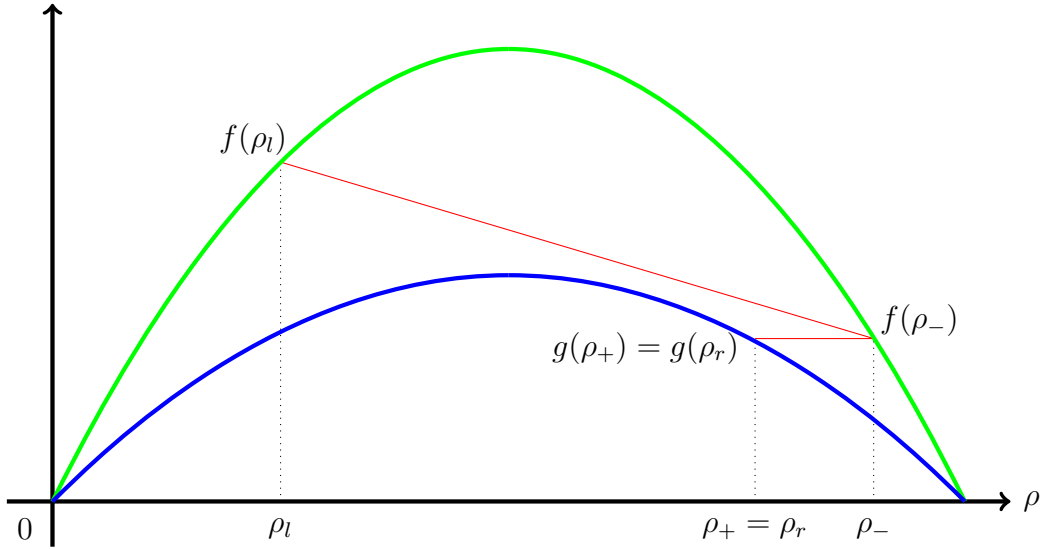

\begin{tikzpicture}[scale=12]
\draw[->, ultra thick] (0,-0.05)--(0,0.55); 
\draw[->,ultra thick] (-0.05,0)--(1.05,0) node[right] {$\rho$};
 \draw[dotted] ( 0.25 ,0) - -(0.25 ,0.375);
  \draw[dotted] ( 0.77 ,0) - -(0.77 ,0.18);
   \draw[dotted] ( 0.9 ,0) - -(0.9 ,0.18);
    \draw[red] ( 0.77 ,0.18) - -(0.9 ,0.18);
     \draw[red] ( 0.25 ,0.375) - -(0.9 ,0.18);
      \node at (-0.03,-0.03) {$0$};
   \node at (0.22,0.4) {$f(\rho_l)$};
    \node at (0.95,0.2) {$f(\rho_-)$};
     \node at (0.65,0.17) {$g(\rho_+)=g(\rho_r)$};
      \node at (0.25,-0.03) {$\rho_l$};
       \node at (0.77,-0.03) {$\rho_+=\rho_r$};
         \node at (0.9,-0.03) {$\rho_-$};
       \draw[ultra thick,green](0.5,.5) parabola (0,0);
\draw[ultra thick,green](0.5,.5) parabola (1,0);
 \draw[ultra thick,blue](.5,0.25) parabola (0,0);
 \draw[ultra thick,blue] (.5,0.25)   parabola (1,0) ;
\end{tikzpicture}
\captionof{figure}{Fundamental diagrams relative to \eqref{eq:counterex}.}\label{fi:diagrams}
\end{center}

A complete description of conservation laws with discontinuous flux can be found in \cite{GNPT,KR}. 

\end{remark}
We use the following definitions of solution.

\begin{definition}
\label{def:sol}
We say that a function $\rho:[0,\infty)\times\R\to\R$ is a weak solution of \eqref{eq:CL} if
\begin{equation}
0\le \rho\le 1,\qquad \norm{\rho(t,\cdot)}_{L^1(\R)}\le \norm{\rho_0}_{L^1(\R)},
\end{equation}
for almost every $t>0$ and for every test function $\vfi\in \Cc1(\R^2)$ 
\begin{equation*}
\int_0^\infty\int_\R\left( \rho \pt\vfi+ f(t,x,\rho)\px\vfi\right)dtdx +\int_\R \rho_0(x)\vfi(0,x)dx=0.
\end{equation*}
\end{definition}

\begin{definition}
	A function $\rho\in(L^1\cap L^\infty)(\R^+\times\R; [0,\rho_{\max}])$ is an entropy weak solution of \eqref{eq:CL}, if 
\begin{itemize}
\item[1)] for all $\kappa\in\R,$ and any test function $\varphi\in\Cc1(\R^2;\R^+)$ which vanishes for $x\leq0,$ 
\begin{align*}
	&	\int_0^{+\infty}\int_{\R^+} \abs{\rho-\kappa}\varphi_t+\abs{\rho-\kappa}(1-\omega_\eta\ast \rho)v_r \varphi_x\\
	&\hspace{5em}	-\sgn{(\rho-\kappa)}\,\kappa\, \px(\omega_\eta\ast\rho)v_r\varphi\, dx\,dt+ \int_{\R^+} \abs{\rho_0(x)-\kappa}\varphi(0,x) dx \geq0;
\end{align*}\\
\item[2)] for all $\kappa\in\R,$ and  any test function $\varphi\in\Cc1(\R^2;\R^+)$ which vanishes for $x\geq0,$ 
\begin{align*}
&	\int_0^{+\infty}\int_{\R^-} \abs{\rho-\kappa}\varphi_t+\abs{\rho-\kappa}(1-\omega_\eta\ast \rho)v_l\, \varphi_x\\
&\hspace{5em}	-\sgn{(\rho-\kappa)}\,\kappa\, \px(\omega_\eta\ast\rho)\,v_l\,\varphi\, dx\,dt+ \int_{\R^-} \abs{\rho_0(x)-\kappa}\varphi(0,x) dx \geq0;
\end{align*}
\item[3)]
for all $\kappa\in\R,$ and  any test function $\varphi\in\Cc1(\R^2;\R^+)$ 
\begin{align*}
	&	\int_0^{+\infty}\int_{\R} \abs{\rho-\kappa}\varphi_t+\abs{\rho-\kappa}(1-\omega_\eta\ast \rho)v(x)\, \varphi_x\\
	&\hspace{5em}	-\int_0^{+\infty}\int_{\R^* }\sgn{(\rho-\kappa)}\,\kappa\, \px\left((\omega_\eta\ast\rho)\,\mathfrak{v}(x)\right)\,\varphi\, dx\,dt+ \int_{\R} \abs{\rho_0(x)-\kappa}\varphi(0,x) dx \\
	&\hspace{5em}+ \int_{0}^{+\infty}  \abs{(v_r-v_l) \kappa (1-w*\rho)} \varphi(t,0) dt\geq0;
	\end{align*}
\item[4)]   the traces are such that the jump
\begin{equation}
\abs{\rho_l-\rho_r}
\end{equation}
is the smallest possible that  
satisfies the Rankine-Hugoniot condition\\
\begin{equation*}
f(t,0^+,\rho_r)=f(t,0^-,\rho_l) \text{ i.e. }
v_l \rho_l=v_r \rho_r,
\end{equation*}
 where we denoted with $f(t,0^\pm,\rho_{r,l})=\lim_{x\to0^\pm} \mathfrak{v}(x) \rho(t,x) \int_{x}^{x+\eta}\omega_\eta(y-x)\rho(y,t) dy$.
 \end{itemize}
\end{definition}

\begin{remark}
	We would like to underline that the existence of strong right and left traces, respectively $\rho_r$ and $\rho_l$, is ensured by the genuine non-linearity of our flux function, as it is proved in \cite{AleMitrovic2013,AndreianovMitrovic}. 
\end{remark}

The main result of this paper is the following.

\begin{theorem}
\label{th:main}
Assume  \eqref{ass:v}, \eqref{ass:w}, and \eqref{ass:init}. Then, the initial value problem \eqref{eq:CL} possesses an unique entropy solution $u$ in the sense of Definition 2.2. 
Moreover, if $u$ and $v$ are two entropy solutions of  \eqref{eq:CL} in the sense of Definition 2.2 
, the following inequality holds
 \begin{equation}
 \label{eq:stability}
\norm{u(t,\cdot)-v(t,\cdot)}_{L^1(\R)}\le  e^{K(T) t}\norm{u(0,\cdot)-v(0,\cdot)}_{L^1(\R)},
\end{equation}
for almost every $0<t<T$, $R>0$, and some suitable constant $K(T)>0$.
\end{theorem}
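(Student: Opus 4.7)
The plan is to prove existence via a vanishing-viscosity scheme in which the discontinuous velocity is simultaneously mollified, and to prove uniqueness via Kruzkov's doubling-of-variables technique adapted to the interface $\{x=0\}$. First, I would replace $\mathfrak{v}$ by a smooth, non-decreasing approximation $\mathfrak{v}_\varepsilon$ with $v_l \le \mathfrak{v}_\varepsilon \le v_r$, mollify the initial datum into $\rho_{0,\varepsilon}$ preserving the $L^\infty$ and $BV$ bounds, and consider the viscous problem
\begin{equation*}
\pt\rho_\varepsilon + \px\!\left(\rho_\varepsilon(1-w_\eta\ast\rho_\varepsilon)\mathfrak{v}_\varepsilon(x)\right)= \varepsilon\,\partial_{xx}^2\rho_\varepsilon,\qquad \rho_\varepsilon(0,\cdot)=\rho_{0,\varepsilon}.
\end{equation*}
Well-posedness of this quasilinear parabolic equation on $[0,T]$ follows from standard fixed-point/Schauder arguments, with $\rho_\varepsilon\in C([0,T]; L^2) \cap L^2(0,T; H^2)$.

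The crux is producing uniform-in-$\varepsilon$ estimates. The bound $0\le \rho_\varepsilon \le 1$ is obtained by showing that $0$ and $1$ are sub- and super-solutions: at a maximum point $\rho_\varepsilon = 1$ the factor $(1-w_\eta\ast\rho_\varepsilon)$ vanishes, and the monotonicity $\mathfrak{v}_\varepsilon' \ge 0$ forced by \eqref{ass:v} prevents creation of a new maximum driven by the velocity jump. For the $BV$ estimate I would differentiate the equation in $x$, multiply by a smooth regularisation of $\sgn(\px\rho_\varepsilon)$ and integrate, using the identities \eqref{eq:w'} to recast derivatives of the convolution as convolutions with $w_\eta'$ and $w_\eta''$; the problematic term $\rho_\varepsilon(1-w_\eta\ast\rho_\varepsilon)\mathfrak{v}_\varepsilon'$ has uniform total variation bounded by $(v_r-v_l)\norm{\rho_\varepsilon}_{L^\infty}$, so Gronwall closes the estimate. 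Using the equation to obtain $L^1$ equicontinuity in $t$, Helly's theorem furnishes a subsequence converging in $L^1_{\mathrm{loc}}$ to a limit $\rho$ which is a weak solution in the sense of Definition 2.1. Multiplying the viscous equation by a smooth approximation of $\sgn(\rho_\varepsilon - \kappa)$ and sending $\varepsilon \to 0$ against test functions supported on one side of $\{x=0\}$ yields the entropy inequalities 1) and 2) of Definition 2.2; the global inequality 3) is produced analogously with an extra interface contribution arising from the jump of $\mathfrak{v}_\varepsilon$. The Rankine--Hugoniot identity together with the minimal-jump condition 4) follows from the existence of strong one-sided traces (by the genuine non-linearity, as recalled in the remark after Definition 2.2) and the fact that the vanishing-viscosity limit selects the unique admissible connection across the interface.

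For uniqueness and the stability estimate \eqref{eq:stability}, I would apply doubling of variables separately in each half-plane $(0,\infty)\times\R^\pm$. Let $u,v$ be two entropy solutions; using the half-plane inequalities 1) and 2) with $\kappa=v(s,y)$ and $\kappa=u(t,x)$ against a test function $\vfi(t,x,s,y)$ concentrated on the diagonal $(t,x)=(s,y)$, then collapsing $(s,y)\to(t,x)$, one obtains, on each half-plane,
\begin{equation*}
\pt|u-v| + \px\!\bigl[\sgn(u-v)\bigl(f(t,x,u)-f(t,x,v)\bigr)\bigr] \le \mathcal{R}(t,x),
\end{equation*}
where $\mathcal{R}$ collects the non-local contributions involving $w_\eta\ast(u-v)$ and $\px(w_\eta\ast u)$, bounded after integration in $x$ by $C\bigl(\norm{w_\eta}_{W^{2,\infty}},\norm{u}_{BV}\bigr)\,\norm{u(t,\cdot)-v(t,\cdot)}_{L^1(\R)}$ thanks to the $BV$ bound on $u$. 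Integrating against a cut-off approximating $\mathbbm{1}_{[0,t]\times[-R,R]}$, the boundary contributions at $x=0^\pm$ produced on each half-plane are absorbed using inequality 3) together with the Rankine--Hugoniot identity in 4); summing the two half-plane inequalities yields a Gronwall inequality for $\norm{u(t,\cdot)-v(t,\cdot)}_{L^1(\R)}$ with constant $K(T)$ depending on $T$, on $w_\eta$ and on the uniform $L^\infty/BV$ bounds, from which \eqref{eq:stability} follows. The main obstacle, I expect, is precisely this interface book-keeping: the discontinuity of the flux obstructs the plain Kruzkov argument, and one has to check that the interface inequality 3) combined with the minimal-jump selection 4) produces a boundary term with the correct sign to be discarded, rather than an uncontrolled contribution that would spoil Gronwall.
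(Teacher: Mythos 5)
Your overall strategy coincides with the paper's: vanishing viscosity with a mollified velocity $\ve$, uniform $L^\infty$ and $BV$ bounds, Helly compactness, and a Kru\v{z}kov doubling argument in which the interface contribution at $x=0$ is disposed of via the Rankine--Hugoniot relation $v_l\rho_l=v_r\rho_r$ (the paper shows the corresponding term $S$ is in fact exactly zero, not merely nonpositive). However, there is a genuine gap in your $BV$ step. You claim that, after differentiating the viscous equation in $x$, the term $\re(1-w_\eta*\re)\,\ve'$ ``has uniform total variation bounded by $(v_r-v_l)\norm{\re}_{L^\infty}$'', so that Gronwall closes a global-in-space $BV$ estimate. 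This is false: what is uniformly bounded is $\norm{\ve'}_{L^1(\R)}=v_r-v_l$, i.e.\ the total variation of $\ve$ itself, whereas the differentiated equation produces $\ve''$, and
\begin{equation*}
\int_\R \re\,(1-w_\eta*\re)\,\abs{\ve''(x)}\,dx \;\le\; \norm{\re}_{L^\infty}\int_\R\abs{\ve''}\,dx \;\sim\; \frac{v_r-v_l}{\eps},
\end{equation*}
which blows up as $\eps\to0$; no Gronwall argument absorbs an $O(1/\eps)$ source. This is precisely where the flux discontinuity bites, and the paper's Lemma~\ref{lm:BVx} is built around it: one multiplies by a cutoff $\chi$ vanishing on $(-\delta,\delta)$ and takes $\eps<\delta$, so that $\supp\chi\cap\supp\ve'=\emptyset$ and every term containing $\ve'$ or $\ve''$ drops out; the resulting bound is only $\norm{\px\re(t,\cdot)}_{L^1((-\infty,-\delta)\cup(\delta,\infty))}\le C_\delta$, and compactness on all of $\R$ is then recovered by a diagonal procedure over $\delta\downarrow0$ (Lemma~\ref{lm:comp}). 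You must replace your global $BV$ claim by this localized version (or by another compactness mechanism); as written the step fails.

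A smaller but real inaccuracy: your justification of $\re\le1$ asserts that at a maximum point with $\re=1$ the factor $1-w_\eta*\re$ vanishes. It does not, since $(w_\eta*\re)(x)=\int_x^{x+\eta}w_\eta(y-x)\re(y)\,dy<1$ unless $\re\equiv1$ on the whole look-ahead window. The paper's Lemma~\ref{lm:linfty} instead rewrites $w_\eta'*\re$ relative to $\norm{\re}_{L^\infty}$, exploits the signs $w_\eta\ge0\ge w_\eta'$ and $\ve'\ge0$, and inserts the absolute value $\abs{1-w_\eta*\re}$ into the $\ve'$-term of the regularized equation \eqref{eq:CLeps.2} precisely so that it acts as a damping term; some version of this is needed to make the upper bound rigorous. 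The uniqueness part of your plan is essentially the paper's argument, modulo bookkeeping: the paper establishes inequality \eqref{eq:inequality} for test functions vanishing near $0$, removes that restriction with the cutoff $\Psi_h$, and bounds the non-local remainder using only $\norm{w_\eta'}_{L^\infty}$ and the $L^1$ distance of the two solutions, not a $BV$ bound.
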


\section{Existence}
\label{sec:exist}

Our existence argument is based on passing to the limit in a vanishing viscosity approximation of \eqref{eq:CL}.

Fix a small number $\eps>0$ and let $\re=\re(t,x)$ be the unique classical solution of the following problem
\begin{equation}
\label{eq:CLeps.2}
\begin{cases}
\pt \re + (1-w_\eta*\re)\ve(x) \px \re+\re |1-w_\eta*\re| \ve'(x)&{}\\
\qquad \qquad+\re (w_\eta'*\re) \ve(x)+\re^2 w_\eta(0)\ve(x)=\varepsilon \pxx \re, &\quad (t,x)\in (0,\infty)\times \R,\\ 
\re(0,x)=\rho_{0,\eps}(x), &\quad x\in \R,
\end{cases}
\end{equation}
where $\rho_{0,\eps}$ and $\ve$ are $C^\infty(\R)$ approximations of $\rho_0$ and $v$ such that
\begin{equation}
\label{eq:init-v-eps}
\begin{split}
&\rho_{0,\eps}\to \rho_{0},\quad \text{a.e. and in $L^p(\R),\, 1\le p< \infty$},\\
&0\le \rho_{0,\eps}\le 1,\qquad \quad \norm{\rho_{0,\eps}}_{L^1(\R)}\le \norm{\rho_{0}}_{L^1(\R)},\quad \norm{\px \rho_{0,\eps}}_{L^1(\R)}\le C_0\\
&v_l\le\ve\le v_r,\quad \ve'\ge0,\quad \ve(x)=\begin{cases}
v_l&\text{if $x<-\eps$},\\
v_r&\text{if $x>\eps$},
\end{cases}
\end{split}
\end{equation}
for every $\eps>0$ and some positive constant $C_0$ independent on $\eps$.
The well-posedness of \eqref{eq:CLeps.2} can be obtained following the same arguments of \cite{CC,CdR,CHK}.

Let us prove some a priori estimates on $\re$ denoting with $C_0$ the constants which depend only on the initial data, and $C(T)$ the constants which depend also on $T$.

\begin{lemma}[{\bf $L^\infty$ estimate}]
\label{lm:linfty}
We have that
\begin{equation*}
0\le \re\le 1,
\end{equation*}
for every $\eps>0$.
\end{lemma}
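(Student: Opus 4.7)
The plan is a comparison-principle argument based on the observation that both $\re\equiv 0$ and $\re\equiv 1$ are classical solutions of the viscous equation \eqref{eq:CLeps.2}. For $\re\equiv 0$ this is trivial, while for $\re\equiv 1$ the convective and viscous terms vanish, and using $\|w_\eta\|_{L^1(0,\eta)}=1$ together with $w_\eta(\eta)=0$ one computes $w_\eta\ast 1\equiv 1$ and $w_\eta'\ast 1\equiv -w_\eta(0)$, so that the two remaining reactive contributions cancel exactly,
\[
\bigl[\re(w_\eta'\ast\re)\ve+\re^{2}w_\eta(0)\ve\bigr]\Big|_{\re\equiv 1}=\bigl(-w_\eta(0)+w_\eta(0)\bigr)\ve=0.
\]
Moreover, once $\re\in[0,1]$ is known, the identity \eqref{eq:w'} lets one rewrite \eqref{eq:CLeps.2} in the conservative form $\pt\re+\px[\re\ve(x)(1-w_\eta\ast\re)]=\eps\pxx\re$, which is helpful in the sequel.

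For the lower bound I would monitor $m(t):=\inf_{x\in\R}\re(t,x)$. The local-in-time classical existence theory of \cite{CC,CdR,CHK} delivers a smooth solution with sufficient spatial decay, so the infimum is attained at some $x_0(t)$, where $\px\re=0$ and $\pxx\re\ge 0$; the convective term therefore drops out and the viscous term contributes non-negatively. Each of the remaining reactive terms in \eqref{eq:CLeps.2} carries a factor of $\re$ multiplied by coefficients that are uniformly bounded as long as $\|\re\|_{L^\infty}$ is finite (by the uniform boundedness of $\ve$, $\ve'$, $w_\eta\ast\re$ and $w_\eta'\ast\re$), so one obtains a pointwise differential inequality of the form $m'(t)\ge -C(T)\,m(t)$; since $m(0)\ge 0$, Gr\"onwall yields $m(t)\ge 0$.

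For the upper bound I would apply the same idea to $\sigma_\eps:=1-\re$. Using that $\re\equiv 1$ solves the equation together with the relations $w_\eta\ast\sigma_\eps=1-w_\eta\ast\re$ and $w_\eta'\ast\sigma_\eps=-w_\eta(0)-w_\eta'\ast\re$, the function $\sigma_\eps$ satisfies a parabolic equation of exactly the same structure with $\sigma_\eps(0,\cdot)\ge 0$, so the infimum argument produces $\sigma_\eps\ge 0$, i.e.\ $\re\le 1$. The main obstacle I expect is the \emph{self-referential} character of the bound: the coefficient $|1-w_\eta\ast\re|$ and the other nonlocal coefficients are only \emph{a priori} bounded once one knows $\re\in[0,1]$. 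This is handled by the usual continuation/bootstrap argument: start from the smooth local solution guaranteed by \cite{CC,CdR,CHK}, apply the comparison argument on the interval of existence to deduce $\re\in[0,1]$, and then use this $L^\infty$ bound to extend the solution to all of $(0,\infty)$.
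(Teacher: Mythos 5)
Your overall strategy --- a maximum-principle comparison against the two constant solutions $0$ and $1$ of \eqref{eq:CLeps.2} --- is the same as the paper's, and your verification that $\re\equiv 1$ is a stationary solution (via $w_\eta\ast 1=1$ and $w_\eta'\ast 1=-w_\eta(0)$) is correct; it is exactly the cancellation the paper exploits. The lower bound is also fine: every zeroth-order term carries a pointwise factor $\re(t,x)$, so the infimum argument closes. The gap is in the upper bound, which is the only nontrivial half of the lemma. Writing $\sigma_\eps=1-\re$, the equation becomes
\begin{equation*}
\pt\sigma_\eps+(w_\eta\ast\sigma_\eps)\ve(x)\px\sigma_\eps-(1-\sigma_\eps)\abs{w_\eta\ast\sigma_\eps}\,\ve'(x)+(1-\sigma_\eps)\ve(x)\bigl(w_\eta'\ast\sigma_\eps+\sigma_\eps w_\eta(0)\bigr)=\eps\pxx\sigma_\eps,
\end{equation*}
and this does \emph{not} have ``exactly the same structure'': the reaction terms involve $w_\eta\ast\sigma_\eps$ and $w_\eta'\ast\sigma_\eps$, i.e.\ values of $\sigma_\eps$ at points $y\in(x,x+\eta)$ away from the infimum point, so they are not of the form ``$\sigma_\eps(t,x)$ times a bounded coefficient,'' and the mechanism you used for the lower bound does not apply. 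In particular $\abs{w_\eta\ast\sigma_\eps}$ need not be small where $\sigma_\eps$ attains a negative infimum.

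The argument can be closed, but only by invoking the sign hypotheses your proposal never uses for this step: $w_\eta'\le 0$ and $\ve'\ge 0$ (equivalently $v_l<v_r$). These are not cosmetic --- the paper's remark on assumption \eqref{ass:v} shows the upper bound genuinely fails when $v_l>v_r$, so an argument that does not use \eqref{ass:v} cannot be complete. Concretely, at a point where $\sigma_\eps=m:=\inf\sigma_\eps<0$ one needs $(1-\sigma_\eps)\abs{w_\eta\ast\sigma_\eps}\,\ve'\ge 0$ (here $\ve'\ge0$ and the already-proved bound $\sigma_\eps\le 1$ enter) together with the decomposition
\begin{equation*}
w_\eta'\ast\sigma_\eps=w_\eta'\ast(\sigma_\eps-m)-m\,w_\eta(0),\qquad w_\eta'\ast(\sigma_\eps-m)\le 0,
\end{equation*}
which gives $(1-\sigma_\eps)\ve\bigl(w_\eta'\ast\sigma_\eps+\sigma_\eps w_\eta(0)\bigr)=(1-m)\ve\,\bigl(w_\eta'\ast(\sigma_\eps-m)\bigr)\le 0$ at the infimum point, so that the nonlocal reaction pushes $\sigma_\eps$ upward there. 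This is precisely the identity the paper uses, in the form $\omega_\eta'\ast\re=\omega_\eta'\ast\bigl(\re-\norm{\re}_{L^\infty}\bigr)-\norm{\re}_{L^\infty}\omega_\eta(0)$, combined with the auxiliary function $r=e^{-\lambda t}\re$ to run a contradiction at an interior maximum. Without this step, and without citing $\ve'\ge0$, your upper bound is asserted rather than proved.
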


\begin{proof}
Thanks to \eqref{eq:init-v-eps}, $0$ is a subsolution of \eqref{eq:CLeps.2}, due to the Maximum Principle for parabolic equations we have that
\begin{equation}
\label{eq:pos}
 \re\ge 0.
\end{equation}

We have to prove
\begin{equation}
\label{eq:le1}
 \re\le 1.
\end{equation}

Assume by contradiction that \eqref{eq:le1} does not hold.

Let us define the function $r(t,x)=e^{-\lambda t}\re(t,x).$ 
We can choose $\lambda$ so small that
\begin{equation}
\label{eq:CLeps.4.0.1}
\norm{r}_{L^\infty((0,\infty)\times\R)}>1.
\end{equation}

Thanks to \eqref{eq:CLeps.2}, $r$ solves the equation
\begin{equation}
\label{eq:CLeps.4.0}
\begin{split}
\pt r +\lambda r &+ (1-w_\eta*\re)\ve(x) \px r-\varepsilon \pxx r\\
=&-r(w_\eta'*\re)\ve(x)-r|1-w_\eta*\re| \ve'(x)- e^{\lambda t}r^2  w_\eta(0)\ve(x).
\end{split}
\end{equation}
Since 
\begin{equation*}
(\omega_\eta'\ast\re(t,\cdot))(x)=\int_{x}^{x+\eta}\omega_\eta'(y-x)\left(\re(t,y)-\norm{\re}_{L^\infty((0,\infty)\times\R)}\right)dy-
\norm{\re}_{L^\infty((0,\infty)\times\R)} \omega_\eta(0), 
\end{equation*}
we can write 
\begin{equation}
\begin{split}
\pt r +\lambda r &+ (1-w_\eta*\re)\ve(x) \px r-\varepsilon \pxx r\\
=&-r\left(w_\eta'*\left(\re-\norm{\re}_{L^\infty((0,\infty)\times\R)}\right)\right)\ve(x)+r \omega_\eta(0)\norm{\re}_{L^\infty((0,\infty)\times\R)}\ve(x)\\
&-r|1-w_\eta*\re| \ve'(x)- e^{\lambda t}r^2  w_\eta(0)\ve(x)\\
=&-r\left(w_\eta'*\left(\re-\norm{\re}_{L^\infty((0,\infty)\times\R)}\right)\right)\ve(x)-r|1-w_\eta*\re| \ve'(x)\\
&+r(\norm{\re}_{L^\infty((0,\infty)\times\R)}-\re) w_\eta(0)\ve(x)\leq 0.
\end{split}
\end{equation}
Let $(\bar t, \bar x)$ be such that 
\begin{equation*}
\norm{r}_{L^\infty((0,\infty)\times\R)}=r(\bar t,\bar x).
\end{equation*} 
Since, thanks to \eqref{eq:CLeps.4.0.1},
\begin{equation*}
\norm{r(0,\cdot)}_{L^\infty(\R)}\le 1<r(\bar t,\bar x),
\end{equation*} 
we must have
\begin{equation*}
\bar t>0.
\end{equation*} 
Therefore we can evaluate  \eqref{eq:CLeps.4.0} in $(\bar t, \bar x)$ and gain 
\begin{equation*}
0<\lambda \norm{r}_{L^\infty((0,\infty)\times\R)}\leq 0.
\end{equation*} 
Since, this cannot be, \eqref{eq:le1} is proved.
 \end{proof}

Using \eqref{ass:w} and Lemma \ref{lm:linfty}, we know that
\begin{equation}
\label{eq:linftyconv}
0\le w_\eta*\re\le1
\end{equation} 
and then we can rewrite \eqref{eq:CLeps.2} as follows
\begin{equation}
\label{eq:CLeps}
\begin{cases}
\pt \re + \px (\re (1-w_\eta*\re)\ve(x))=\eps\pxx\re, &\quad (t,x)\in (0,\infty)\times \R,\\ 
\re(0,x)=\rho_{0,\eps}(x), &\quad x\in \R.
\end{cases}
\end{equation}

\begin{lemma}[{\bf $L^1$ estimate}]
\label{lm:l1}
We have that
\begin{align}
\label{eq:l1}
\norm{\re(t,\cdot)}_{L^1(\R)}\le& \norm{\rho_0}_{L^1(\R)},\\
\label{eq:l1conv}
\norm{(w_\eta*\re)(t,\cdot)}_{L^1(\R)}\le& \norm{\rho_0}_{L^1(\R)},\\
\label{eq:BVconv}
\norm{\px(w_\eta*\re)(t,\cdot)}_{L^1(\R)}\le& 2w_\eta(0)\norm{\rho_0}_{L^1(\R)},
\end{align}
for every $t\ge0$ and $\eps>0$.
\end{lemma}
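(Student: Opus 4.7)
The plan is to establish the three bounds in the natural order \eqref{eq:l1}, \eqref{eq:l1conv}, \eqref{eq:BVconv}, since the last two follow from the first by purely algebraic manipulations using the properties of $w_\eta$ in \eqref{ass:w} together with the identity \eqref{eq:w'}.

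For \eqref{eq:l1}, I would exploit the conservative form \eqref{eq:CLeps} and the positivity $\re\ge0$ established in Lemma \ref{lm:linfty}. Integrating \eqref{eq:CLeps} in $x\in\R$ and using the sufficient spatial decay of the classical solution $\re$ (inherited from $\rho_{0,\eps}\in L^1$ and standard parabolic regularization, which is what justifies the well-posedness in \eqref{eq:init-v-eps}), the viscous term and the flux term contribute no boundary terms at infinity. Hence $t\mapsto \int_\R \re(t,x)\,dx$ is constant, and by \eqref{eq:init-v-eps} this constant is bounded by $\norm{\rho_0}_{L^1(\R)}$, giving \eqref{eq:l1}.

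For \eqref{eq:l1conv}, using the definition $(w_\eta\ast\re)(t,x)=\int_0^\eta w_\eta(z)\re(t,x+z)\,dz$ and applying Tonelli's theorem (all integrands are nonnegative), I get
\begin{equation*}
\int_\R (w_\eta\ast\re)(t,x)\,dx=\int_0^\eta w_\eta(z)\left(\int_\R \re(t,x+z)\,dx\right)dz=\norm{w_\eta}_{L^1(0,\eta)}\norm{\re(t,\cdot)}_{L^1(\R)},
\end{equation*}
which is $\le\norm{\rho_0}_{L^1(\R)}$ by \eqref{ass:w} and \eqref{eq:l1}.

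For \eqref{eq:BVconv}, I would use the first identity in \eqref{eq:w'}, giving
\begin{equation*}
\norm{\px(w_\eta\ast\re)(t,\cdot)}_{L^1(\R)}\le \norm{(w_\eta'\ast\re)(t,\cdot)}_{L^1(\R)}+w_\eta(0)\norm{\re(t,\cdot)}_{L^1(\R)}.
\end{equation*}
Since $w_\eta'\le 0$ and $\re\ge 0$, the convolution $w_\eta'\ast\re$ has constant sign, so computing $\norm{w_\eta'\ast\re}_{L^1(\R)}$ via Tonelli as above reduces it to $\norm{w_\eta'}_{L^1(0,\eta)}\norm{\re(t,\cdot)}_{L^1(\R)}$, and $\norm{w_\eta'}_{L^1(0,\eta)}=-\int_0^\eta w_\eta'(z)\,dz=w_\eta(0)$ thanks to $w_\eta(\eta)=0$ in \eqref{ass:w}. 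Assembling the two pieces with \eqref{eq:l1} yields the constant $2w_\eta(0)\norm{\rho_0}_{L^1(\R)}$.

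The only non-routine step is the justification of vanishing boundary terms at $x=\pm\infty$ in the integration that yields \eqref{eq:l1}; this is standard for the vanishing viscosity approximation with $L^1\cap L^\infty$ initial data and smooth coefficients, and is already implicit in the regularity theory cited to set up \eqref{eq:CLeps.2}. The remaining computations are elementary once the sign of $w_\eta'$ is used.
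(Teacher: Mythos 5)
Your proposal is correct and follows essentially the same route as the paper: integrating the conservative form \eqref{eq:CLeps} to get conservation of the (nonnegative) total mass, then Tonelli's theorem together with $\norm{w_\eta}_{L^1(0,\eta)}=1$ for \eqref{eq:l1conv}, and the identity \eqref{eq:w'} combined with $\norm{w_\eta'}_{L^1(0,\eta)}=w_\eta(0)$ for \eqref{eq:BVconv}. Your explicit remark about the vanishing of boundary terms at $x=\pm\infty$ is a detail the paper leaves implicit, but it does not change the argument.
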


\begin{proof}
We have 
\begin{align*}
\frac{d}{dt}\int_\R \re dx=&\int_\R\pt\re dx
=\eps\int_\R \pxx\re dx-\int_\R \px (\re (1-w_\eta*\re)\ve(x))dx=0.
\end{align*}
Therefore,
\begin{equation*}
\norm{\re(t,\cdot)}_{L^1(\R)}= \norm{\rho_{0,\eps}}_{L^1(\R)},
\end{equation*}
and \eqref{eq:l1} follows from \eqref{eq:init-v-eps}.

Using \eqref{ass:w}, \eqref{eq:w'}, \eqref{eq:linftyconv}, and Lemma \ref{lm:linfty}
\begin{align*}
\int_\R(w_\eta*\re)(t,x)dx =& \int_\R\int_x^{x+\eta}w_\eta(y-x)\re(t,y)dydx=  \int_\R\int_0^{\eta}w_\eta(y)\re(t,y+x)dydx\\
=& \norm{w_\eta}_{L^1(\R)}\norm{\re(t,\cdot)}_{L^1(\R)}=\norm{\re(t,\cdot)}_{L^1(\R)},\\
\int_\R|\px(w_\eta*\re)(t,x)|dx\le& \int_\R\int_x^{x+\eta}|w'_\eta(y-x)|\re(t,y)dxdy+w_\eta(0)\int_\R\re dx\\
=&  -\int_\R\int_0^{\eta}w_\eta'(y)\re(t,y+x)dxdy+w_\eta(0)\int_\R\re dx\\
=& 2w_\eta(0)\norm{\re(t,\cdot)}_{L^1(\R)}.
\end{align*}
Therefore, \eqref{eq:linftyconv}, \eqref{eq:l1conv}, and \eqref{eq:BVconv} follow from \eqref{eq:l1}.
\end{proof}

\begin{lemma}[{\bf $BV$ estimate in x}]
\label{lm:BVx}
We have that
\begin{equation*}
\norm{\px\re(t,\cdot)}_{L^1((-\infty,-\delta)\cup(\delta,\infty))}\le C_\delta,
\end{equation*}
for every $t\ge0$ and $\eps,\,\delta>0$ where $C_\delta$ is a constant depending on $\delta$ but not on $\eps$.
\end{lemma}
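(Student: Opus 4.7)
The plan is to differentiate the viscous equation \eqref{eq:CLeps} in $x$ and to control the $L^1$ norm of $\qe:=\px\re$ outside a neighbourhood of the discontinuity $x=0$ by means of a smooth spatial cutoff. Writing $A_\eps:=1-w_\eta\ast\re$, which belongs to $[0,1]$ by Lemma~\ref{lm:linfty} and \eqref{ass:w}, equation \eqref{eq:CLeps} reads $\pt\re+\px(\re A_\eps\ve)=\eps\pxx\re$, and a direct differentiation yields
\begin{equation*}
\pt\qe+A_\eps\ve\,\px\qe+2\qe\,\px(A_\eps\ve)+\re\,\pxx(A_\eps\ve)=\eps\pxx\qe.
\end{equation*}
The second identity in \eqref{eq:w'} gives $\pxx A_\eps=(w_\eta''\ast\re)+w_\eta'(0)\re+w_\eta(0)\qe$, so the equation is essentially closed in $\qe$ modulo source terms bounded in $L^\infty\cap L^1$ thanks to Lemmas~\ref{lm:linfty}--\ref{lm:l1}.

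Fix $\chi=\chi_\delta\in C^\infty(\R)$ with $0\le\chi\le 1$, $\chi\equiv 1$ on $|x|\ge\delta$ and $\chi\equiv 0$ on $|x|\le\delta/2$. The key observation is that as soon as $\eps<\delta/4$ the singular derivatives $\ve'$ and $\ve''$ are supported in $(-\eps,\eps)$ and hence vanish on $\supp\chi$; consequently on $\supp\chi$ the coefficients of the $\qe$-equation reduce to $\ve\,\px A_\eps$ and $\ve\,\pxx A_\eps$, without any $\eps^{-1}$ blow-up. Multiplying by $\sgn(\qe)\chi$ (after a standard regularisation of the sign), integrating by parts in the transport and in the viscous term, and exploiting the favourable sign of the $w_\eta(0)\qe$-contribution, which produces the dissipative term $-w_\eta(0)\int\re\ve\,|\qe|\chi\,dx\le 0$, I arrive at an inequality of the form
\begin{equation*}
\frac{d}{dt}\int_\R|\qe|\chi\,dx\le K\int_\R|\qe|\chi\,dx+C_0+C(\delta)\int_{\delta/2<|x|<\delta}|\qe|\,dx,
\end{equation*}
with $K$ and $C_0$ depending only on $v_r$, $\norma{w_\eta}_{C^2}$, $\eta$ and the bounds furnished by Lemmas~\ref{lm:linfty}--\ref{lm:l1}, and in particular independent of $\eps$ and $\delta$.

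The main obstacle is the last, \emph{cutoff boundary} term, which involves the variation of $\re$ exactly in an annulus around $x=0$ where no $\eps$-uniform BV bound is yet available. I would close the argument by a dyadic bootstrap on the scale of the cutoff: the same estimate applied with $\chi_{\delta/2}$ bounds $\int_{\delta/2<|x|<\delta}|\qe|\,dx$ in terms of $\int|\qe|\chi_{\delta/4}\,dx$, iterated a finite number of times until one reaches a scale where the convolution $w_\eta\ast\re$ is one-sided (i.e.\ at points with $|x|>\eta$ the integration range $[x,x+\eta]$ lies entirely in one half-line). On $\{x>\eta\}$ and $\{x<-\eta\}$ the equation then becomes a non-local scalar conservation law with \emph{constant} velocity $v_r$, respectively $v_l$, and a direct $BV$ estimate of the form $\norma{\px\re(t,\cdot)}_{L^1(|x|>\eta)}\le C(1+t)$ follows by the arguments in \cite{CC,CdR,CHK}; this serves as the base case of the induction. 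Chaining the finitely many dyadic estimates together and applying Gronwall's inequality yields an $\eps$-uniform constant $C_\delta$, which is exactly the claim. The $\delta$-dependence of $C_\delta$ enters through the $L^\infty$ norms $\|\chi_\delta'\|_\infty\sim\delta^{-1}$, $\|\chi_\delta''\|_\infty\sim\delta^{-2}$ and through the number of dyadic iterations, and is consistent with the blow-up of the BV norm at $x=0$ expected in the limit $\eps\to 0$.
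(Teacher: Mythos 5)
Your setup --- differentiating the viscous equation, multiplying by $\sgn(\px\re)\,\chi$ with a smooth cutoff vanishing near $x=0$, and taking $\eps$ small enough that $\ve'$, $\ve''$ vanish on $\supp\chi$ --- is exactly the paper's, and the differential inequality you reach is essentially the right one. The genuine gap is in how you dispose of the cutoff-boundary term $C(\delta)\int_{\delta/2<|x|<\delta}|\px\re|\,dx$. Your dyadic bootstrap runs in the wrong direction: controlling the annulus at scale $\delta$ requires the estimate at scale $\delta/2$, which produces a new annulus at scale $\delta/4$, and so on; the annuli accumulate at $x=0$, which is precisely the point where no $\eps$-uniform $BV$ bound is available (that is why a neighbourhood of the origin is excised in the statement). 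The base case you invoke, $|x|>\eta$, lives at \emph{larger} scales and therefore cannot terminate a recursion whose scales shrink to zero; in addition the constants $C(\delta/2^k)\sim 2^k\delta^{-1}$ blow up along the chain. The induction never closes.

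The paper avoids the boundary term altogether by choosing the cutoff so that $\chi'/\chi$ and $\chi''/\chi$ are bounded. Then $|\chi'|,|\chi''|\le C_\delta\,\chi$ pointwise, and every term generated by integrating by parts against $\chi'$ or $\chi''$ --- the transport term $\int\chi'(1-w_\eta*\re)\ve|\px\re|\,dx$ and the viscous term $\eps\int\chi'\,\px|\px\re|\,dx$ after one further integration by parts --- is absorbed into $C_\delta\int\chi|\px\re|\,dx$, i.e.\ into the Gronwall term itself. No unweighted annulus integral ever appears, and Gronwall applies directly. If you replace your sharp cutoff (identically zero on $|x|\le\delta/2$) by one with bounded logarithmic derivatives, for instance a cutoff that is exponentially small rather than exactly zero near the origin so that the ratios $\chi'/\chi$, $\chi''/\chi$ stay controlled, your computation goes through verbatim, the $\delta$-dependence enters only through $\norm{\chi'/\chi}_{L^\infty}$ and $\norm{\chi''/\chi}_{L^\infty}$, and the whole bootstrap becomes unnecessary.
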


\begin{proof}
Let us consider the function
\begin{equation*}
\chi(x)=\begin{cases}
1,      \qquad &x\in (-\infty,-2\delta)\,\cup\,(2\delta, +\infty),\\
0, &x\in (-\delta,\delta),
\end{cases}
\end{equation*}
such that 
\begin{align*}
\chi\in C^\infty(\R), &\quad
 0\leq \chi(x)\leq 1, \\
 \chi'(x)\geq 0 \: \text{ for } \: x\in[0,+\infty),&\quad\chi'(x)\leq 0 \: \text{ for } \: x\in(-\infty,0].  
\end{align*}
It is not restrictive to assume $\eps<\delta$. In such a way we have that the supports of $\chi$ and $\ve'$ are disjoint.
Finally, we observe that
\begin{equation*}
\frac{\chi'}{\chi},\frac{\chi''}{\chi}\in L^\infty(\R).
\end{equation*}

Differentiating 
the equation in \eqref{eq:CLeps} w.r.t. the space variable
\begin{align*}
\ptx \re &+ \px\big((1-w_\eta*\re)\ve(x) \px \re\big)+\px\big(\re (1-w_\eta*\re) \ve'(x)\big)\\
&+\px\big(\re (w_\eta'*\re) \ve(x)\big)+w_\eta(0)\px\big(\re^2 \ve(x)\big)=\varepsilon \pxxx \re. 
\end{align*}
Using \cite[Lemma 2]{BLN} and Lemmas \ref{lm:linfty}, and \ref{lm:l1}
\begin{align*}
\frac{d}{dt} \int_\R& \abs{\chi(x)\px\re} d x= \int_\R \chi(x)\ptx\re\sgn{\px\re} d x\\
=&\varepsilon  \int_\R \chi(x)\pxxx \re\sgn{\px\re} d x\\
&-\int_\R \chi(x) \px\big((1-w_\eta*\re)\ve(x) \px \re\big)\sgn{\px\re} d x\\
&\underbrace{-\int_\R \chi(x) \px\big(\re |1-w_\eta*\re| \ve'(x)\big)\sgn{\px\re} d x}_{=0}\\
&-\int_\R \chi(x) \px\big(\re (w_\eta'*\re) \ve(x)\big)\sgn{\px\re} d x\\
&-w_\eta(0)\int_\R \chi(x) \px\big(\re^2 \ve(x)\big)\sgn{\px\re} d x\\
=&\underbrace{-\varepsilon  \int_\R \chi(x)(\pxx \re)^2\delta_{\{\px\re=0\}}}_{\le0}-
\varepsilon  \int_\R \chi'(x)\underbrace{\pxx \re\sgn{\px\re} }_{=\px|\px\re|}d x\\
&+\underbrace{\int_\R \chi(x) (1-w_\eta*\re)\ve(x) \px \re\pxx \re\delta_{\{\px\re=0\}}d x}_{=0}\\
&+\int_\R \chi'(x) (1-w_\eta*\re)\ve(x)|\px \re| d x\\
&-\int_\R \chi(x) (w_\eta'*\re) \ve(x)|\px\re| d x\\
&\underbrace{-\int_\R \chi(x) (\re (w_\eta'*\re) \ve'(x)\sgn{\px\re} d x}_{=0}\\
&+\int_\R \chi(x) \re (w_\eta''*\re) \ve(x)\sgn{\px\re} d x+
w_\eta'(0)\int_\R \chi(x) \re^2 \ve(x)\sgn{\px\re} d x\\
&-2w_\eta(0)\int_\R \chi(x) \re \ve(x)|\px\re| d x-w_\eta(0)\underbrace{\int_\R \chi(x) \re^2 \ve'(x)\sgn{\px\re} d x}_{=0}\\
\le c& \int_\R\chi(x)|\px\re|dx+c\int_\R \re  d x\le c \int_\R\chi(x)|\px\re|dx+c\norm{\rho_0}_{L^1(\R)},
\end{align*}
where $\delta_{\{\px\re=0\}}$ is the Dirac delta concentrated on the set $\{\px\re=0\}$.
Thanks to the Gronwall Lemma  we get
\begin{equation*}
\norm{\chi\px\re(t,\cdot)}_{L^1(\R)}\le e^{ct}\norm{\chi\px\rho_{0,\eps}}_{L^1(\R)}+c(e^{ct}-1),
\end{equation*}
and using \eqref{eq:init-v-eps} we get the claim.
\end{proof}

\begin{lemma}[{\bf Compactness}]
\label{lm:comp}
There exists a function $\rho:[0,\infty)\times\R\to\R$ and a subsequence $\{\eps_k\}_k\subset (0,\infty),\,\epk\to0,$ such that
\begin{align*}
&0\le \rho\le 1,\qquad \rho\in BV((0,\infty)\times((-\infty,-\delta)\cup(\delta,\infty))),\qquad \delta>0,\\
&\rho_\epk\to\rho\qquad \text{a.e. and in $L^p_{loc}((0,\infty)\times\R),\,1\le p<\infty$.}
\end{align*}
\end{lemma}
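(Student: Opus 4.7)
The plan is to combine the uniform $L^\infty$ bound (Lemma \ref{lm:linfty}), the $L^1$ bounds (Lemma \ref{lm:l1}), and the localized $BV$-in-$x$ estimate (Lemma \ref{lm:BVx}) with a suitable weak control of $\pt\re$, then apply an Aubin--Lions / Helly type compactness argument on cylinders that stay away from the discontinuity $\{x=0\}$, and finally diagonalize.

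First I would fix $T>0$ and $\delta>0$ and work on the open set $\Omega_\delta=(-\infty,-\delta)\cup(\delta,\infty)$, choosing $\eps<\delta$ so that $\ve$ is locally constant on $\Omega_\delta$. Rewrite \eqref{eq:CLeps} as
\begin{equation*}
\pt\re=\eps\pxx\re-\px\bigl(\re(1-w_\eta\ast\re)\ve(x)\bigr),
\end{equation*}
and test this equation against a compactly supported smooth function $\psi(t,x)$ on $(0,T)\times\Omega_\delta$. The convective term is bounded in $L^1((0,T)\times\Omega_\delta)$ in the strong sense using the product rule, together with the uniform bounds $0\le\re\le 1$, $0\le w_\eta\ast\re\le 1$, the bound \eqref{eq:BVconv} on $\px(w_\eta\ast\re)$, and the localized bound $\|\px\re(t,\cdot)\|_{L^1(\Omega_\delta)}\le C_\delta$ from Lemma \ref{lm:BVx}. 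The viscous term is controlled in the dual norm by integration by parts:
\begin{equation*}
\Bigl|\eps\int_0^T\!\!\int_{\Omega_\delta}\pxx\re\,\psi\,dx\,dt\Bigr|\le\eps\,\|\px\re\|_{L^1((0,T)\times\Omega_\delta)}\,\|\px\psi\|_{L^\infty}\le C(T,\delta)\,\eps\,\|\px\psi\|_{L^\infty}.
\end{equation*}
Therefore $\pt\re$ is uniformly bounded in $L^1(0,T;W^{-1,1}(\Omega_\delta'))$ for every relatively compact $\Omega_\delta'\Subset\Omega_\delta$.

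Second, since $\{\re(t,\cdot)\}$ is uniformly bounded in $BV_{\mathrm{loc}}(\Omega_\delta)$ and the embedding $BV_{\mathrm{loc}}(\Omega_\delta)\hookrightarrow L^1_{\mathrm{loc}}(\Omega_\delta)$ is compact while $L^1_{\mathrm{loc}}\hookrightarrow W^{-1,1}_{\mathrm{loc}}$, Simon's compactness theorem (equivalently, a Kolmogorov--Riesz argument combining the space translation bound from Lemma \ref{lm:BVx} with the time translation bound $\|\re(\cdot+h,\cdot)-\re(\cdot,\cdot)\|_{L^1((0,T-h)\times K)}\le C(T,\delta,K)h$ derived from the previous step) yields a subsequence converging strongly in $L^1((0,T)\times K)$ for every compact $K\subset\Omega_\delta$. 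Choosing $\delta_n\downarrow 0$, $T_n\uparrow\infty$, and $K_n\uparrow\Omega_{\delta_n}$, a standard diagonal extraction produces a single subsequence $\{\re_{\epk}\}$ and a measurable function $\rho$ such that $\re_{\epk}\to\rho$ a.e.\ on $(0,\infty)\times(\R\setminus\{0\})$; since $\{x=0\}$ has Lebesgue measure zero in $(0,\infty)\times\R$, this is a.e.\ convergence on the whole domain. The $L^p_{\mathrm{loc}}$ convergence for $1\le p<\infty$ then follows from the uniform bound $0\le\re\le 1$ and dominated convergence.

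Finally, the pointwise bound $0\le\rho\le 1$ is inherited directly from Lemma \ref{lm:linfty}, and the lower semicontinuity of the total variation with respect to $L^1_{\mathrm{loc}}$ convergence, together with Lemma \ref{lm:BVx} and the uniform time translation estimate above, gives $\rho\in BV((0,T)\times\Omega_\delta)$ for every $T>0$ and $\delta>0$, which is the desired $BV$ conclusion. The main obstacle is the treatment of the viscous term $\eps\pxx\re$ in the time-compactness step: we cannot expect a uniform $L^1$ bound on $\pxx\re$, so the estimate has to be carried out in negative Sobolev norm, exploiting only the $BV$-in-$x$ bound away from $x=0$; handling the fact that the $BV$ estimate degenerates near the interface is precisely why we must work on $\Omega_\delta$ and diagonalize over $\delta\to 0$ instead of arguing directly on $\R$.
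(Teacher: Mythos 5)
Your proof is correct and follows the same overall strategy as the paper: combine the uniform bound $0\le\re\le1$ with the localized $BV$-in-$x$ estimate of Lemma \ref{lm:BVx}, extract a convergent subsequence on sets bounded away from $x=0$, and diagonalize over $\delta\to0$. The one genuine difference is that you make the time-compactness explicit: the paper simply invokes ``Helly's Theorem and a diagonal procedure,'' but Helly's theorem for functions of one variable does not by itself yield a.e.\ convergence in $(t,x)$ without some equicontinuity in time, and your step bounding $\pt\re$ in $L^1(0,T;W^{-1,1}(\Omega_\delta'))$ via the equation (the convective term in $L^1$ thanks to Lemmas \ref{lm:linfty}, \ref{lm:l1}, \ref{lm:BVx}, the viscous term handled after one integration by parts) is exactly what is needed to justify this; it is implicit in the paper's appeal to a space-time $BV$/Helly argument but never written down. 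One small quantitative quibble: the time-translation estimate you quote, $\norm{\re(\cdot+h,\cdot)-\re(\cdot,\cdot)}_{L^1((0,T-h)\times K)}\le C h$, is optimistic for the viscous contribution, since $\eps\pxx\re$ is only controlled in a negative norm; the standard Kru\v{z}kov interpolation against the spatial $BV$ bound gives a modulus of order $\sqrt{h}$ rather than $h$. Any such modulus suffices for Kolmogorov--Riesz or Simon compactness, so this does not affect the conclusion. Your closing remarks (a.e.\ convergence on all of $(0,\infty)\times\R$ since $\{x=0\}$ is null, $L^p_{loc}$ convergence by dominated convergence, $0\le\rho\le1$ and the $BV$ property of the limit by lower semicontinuity of the total variation) match the paper's conclusions.
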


\begin{proof}
Thanks to Lemma \ref{lm:BVx} the sequence $\{\re \chi_{I_\delta}\}_{\eps,\delta>0}$ of approximate solutions to \eqref{eq:CL}  constructed by vanishing viscosity has uniformly bounded variation on each interval of the type $I_\delta=(-\infty,-\delta)\cup(\delta,+\infty), \, \delta>0.$ 
Moreover, thanks to Lemma \ref{lm:linfty} the $L^\infty-$norm of the sequence $\{\re \chi_{I_\delta}\}_{\eps,\delta>0}$ is bounded by 1.
Thus, applying Helly's Theorem and by a diagonal procedure, we can extract a subsequence $\{\rho_{\varepsilon_k}\chi_{I_{\delta_{k}}}\}_{k\in\N}$  that converges to a function $\rho:[0,\infty)\times \R\to\R$ that satisfies the following conditions: $\rho\in BV((0,\infty)\times((-\infty,-\delta)\cup(\delta,\infty)))$ and $0\leq\rho\leq1,$
\begin{equation*}
\rho_{\varepsilon_k}\chi_{I_{\delta_{k}}} \to \rho \hspace{2em}\text{ a.e. and in }L^p_{loc}((0,\infty)\times \R), \,1\leq p<\infty.
\end{equation*}
Thus, we obtain the compactness of the sequence $\{\rho_{\varepsilon_k}\}_{k\in\N}$ a.e. in $(0,\infty)\times \R$ and for this reason we get the claim.
\end{proof}

\section{Uniqueness and Stability}
\label{sec:uniq}

We are now ready to complete the proof of Theorem \ref{th:main}.

\begin{proof}[Proof of Theorem \ref{th:main}]
The existence of entropy solutions follows using the same arguments of \cite{Andreianovvanishing} and Lemma \ref{lm:comp}.

Let us prove the inequality $\eqref{eq:stability}.$
Following \cite[Theorem 2.1]{KarlsenRisebroTowers}, for any two entropy solutions $u$ and $v$ we can derive the $L^1$ contraction property through the doubling of variables technique:
	\begin{align}\notag
		&\iint_{\R^+\times\R} \left(\modulo{u-v}\phi_t+\sgn(u-v)(f(t,x,u)-f(t,x,v))\phi_x\right) dx dt\\ \label{eq:inequality}
		&\leq K \iint_{\R^+\times\R} \modulo{u-v} \phi dx dt, 
	\end{align}
where $K=K(T),$ for any $0\leq \phi \in \mathcal D{(\R^+\times \R^*)}.$
	We remove the assumption in \eqref{eq:inequality} that $\phi$ vanishes near $0,$ by introducing the following Lipschitz function for $h>0$
	\begin{equation*}
		\mu_h(x)=\begin{cases}
			\frac{1}{h}(x+2h), \quad &x\in[-2h,-h],\\
			1, &x\in[-h,h],\\
			\frac{1}{h}(2h-x), \quad &x\in[h,2h],\\
			0, &\modulo{x}\geq 2h.
		\end{cases}
	\end{equation*}
	Now we can define $\Psi_h(x)=1-\mu_h(x),$ noticing that $\Psi_h \to 1$ in $L^1$ as $h\to 0.$ Moreover, $\Psi_h$ vanishes in a neighborhood of $0.$ For any $0\leq \Phi \in \Cc\infty(\R^+\times\R),$ we can check that $\phi=\Phi\Psi_h$ is an admissible test function for \eqref{eq:inequality}. Using $\phi$ in \eqref{eq:inequality} and integrating by parts we get 
	\begin{align*}
		&\iint_{\R^+\times\R} \left(\modulo{u- v}\Phi_t\Psi_h+\sgn(u- v)(f(t,x,u)-f(t,x,v))\Phi_x\Psi_h \right) dx dt\\ 
		&-\underbrace{\iint_{\R^+\times\R}\sgn(u-v)(f(t,x,u)-f(t,x,v))\Phi(t,x)\Psi'_h(x) dx dt}_{J(h)}\\
		&\leq K \iint_{\R^+\times\R} \modulo{u-v} \Phi \Psi_h dx dt. 
	\end{align*}
	Sending $h\to 0$ we end up with 
	\begin{align*}\notag
		&\iint_{\R^+\times\R} \left(\modulo{u-v}\Phi_t+\sgn(u-v)(f(t,x,u)-f(t,x,v))\Phi_x\right) dx dt\\ \label{eq:inequality}
		&\leq K \iint_{\R^+\times\R} \modulo{u-v} \Phi dx dt+\lim_{h\to 0}J(h). 
	\end{align*}
	We can write 
	\begin{align*}
		\lim_{h\to 0} J(h) &= \lim_{h \to 0} \frac{1}{h} \int_0^T\int_h^{2h} \sgn(u-v)(f(t,x,u)-f(t,x,v)) dx dt\\
		&- \lim_{h \to 0} \frac{1}{h} \int_0^T\int_{-2h}^{-h} \sgn(u-v)(f(t,x,u)-f(t,x,v)) dx dt\\
		&=\int_0^T  [\sgn(u-v)(f(t,x,u)-f(t,x,v))]_{x=0^-}^{x=0^+} \Phi(t,0) dt,
	\end{align*}
	where we indicate the limits from the right and left at $x=0.$
	The aim is to prove that the limit $\displaystyle{\lim_{h \to 0}}J(h)\leq 0.$ This is equivalent to prove that the quantity
	\begin{equation*}
		S:=[\sgn(u-v)(f(t,x,u)-f(t,x,v))]_{x=0^-}^{x=0^+}\leq 0.
	\end{equation*}
	In particular, denoting the right and left traces of $u$ and $v$ with $u_\pm$ and $v_\pm,$ we can write 
	\begin{align*}
	S=&v_r \sgn(u_+-v_+) \left(u_+\left(1-\int_0^\eta u(t,y) \omega_\eta(y) dy\right)-v_+\left(1-\int_0^\eta v(t,y) \omega_\eta(y) dy\right)\right)\\
	&-v_l \sgn(u_--v_-) \left(u_-\left(1-\int_0^\eta u(t,y) \omega_\eta(y) dy\right)-v_-\left(1-\int_0^\eta v(t,y) \omega_\eta(y) dy\right)\right)\\
	=&v_r \sgn(u_+-v_+) (v_+-u_+)\int_0^\eta u(t,y) \omega_\eta(y) dy\\
	&-v_r \sgn(u_+-v_+) u_+\int_0^\eta (u(t,y)-v(t,y) )\omega_\eta(y) dy\\
	&+v_r \abs{u_+-v_+} \\
	&-v_l \sgn(u_--v_-) (v_--u_-)\int_0^\eta u(t,y) \omega_\eta(y) dy\\
	&+v_l \sgn(u_--v_-) v_-\int_0^\eta (u(t,y)-v(t,y) )\omega_\eta(y) dy\\
	&-v_l \abs{u_--v_-} \\
	=&\underbrace{(v_r \abs{u_+-v_+}-v_l \abs{u_--v_-})}_{=0} \left(1-\int_0^\eta u(y,t)\omega_\eta(y)dy\right)\\
	&+\underbrace{(v_r v_+-v_l v_-)}_{=0} \sgn(u_--v_-)\int_0^\eta (v(t,y)-u(t,y)) \omega_\eta(y)dy.\\
  \end{align*}
    A simple application of the Rankine-Hugoniot condition yields $S=0,$ being $u_+=\frac{v_l}{v_r}u_-$ and $v_+=\frac{v_l}{v_r}v_-.$
	In this way we know that \eqref{eq:inequality} holds for any $0\leq\phi\in \Cc\infty (\R^+\times\R).$ For $r>1,$ let $\gamma_r:\R \to \R$ be a $C^\infty$ function which takes values in $[0,1]$ and satisfies 
	\begin{equation*}
		\gamma_r(x)=\begin{cases}
			1, \quad \modulo{x}\leq r,\\
			0, \quad \modulo{x}\geq r+1.
		\end{cases}
	\end{equation*}
	Fix $s_0$ and $s$ such that $0<s_0<s<T.$ For any $\tau>0$ and $k>0$ with $0<s_0+\tau<s+k<T,$ let $\beta_{\tau,k}:[0,T]\to\R$ be a Lipschitz function that is linear on $[s_0,s_0+\tau[\cup[s,s+k]$ and satisfies 
	\begin{equation*}
		\beta_{\tau,k}(t)=\begin{cases}
			0, \quad t\in[0,s_0]\cup[s+k,T],\\
			1,  \quad t\in[s_0+\tau,s].
		\end{cases}
	\end{equation*}
	We can take the admissible test function via a standard regularization argument $\phi=\gamma_r(x)\beta_{\tau,k}(t).$ Using this test function in \eqref{eq:inequality} we obtain 
	\begin{align*}
		\notag
		&\frac{1}{k} \int_s^{s+k} \int_{\R} \modulo{u(t,x)-v(t,x)} \gamma_r(x) dx dt  -\frac{1}{\tau} \int_{s_0}^{s_0+k} \int_\R  \modulo{u(t,x)-v(t,x)} \gamma_r(x) dx dt\\
		&\qquad\leq K \int_{s_0}^{s_0+k} \int_\R \modulo{u-v} \gamma_r(x) dx dt\\
		&\qquad\qquad+\norma{\gamma'_r}_\infty \int_{s_0}^{s+k} \int_{r\leq \modulo{x}\leq r+1}\sgn(u-v)(f(t,x,u)-f(t,x,v)) dx dt. 
	\end{align*}
	Sending $s_0\to 0,$ we get 
	\begin{align*}
		\notag
		\frac{1}{k} &\int_s^{s+k} \int_{-r}^{r} 
		\modulo{u(t,x)-v(t,x)} \gamma_r(x) dx \,dt \\
		\leq & \int_{-r}^{r} \modulo{ u_0(x)- v_0(x)} dx + \frac{1}{\tau} \int_{0}^{\tau} \int_{-r}^{r} \modulo{v(t,x)-v_0(x)} dx dt\\
		&+\frac{1}{\tau} \int_{0}^{\tau} \int_{-r}^{r} \modulo{ u(t,x)-u_0(x)} dx \,dt + K \int_{0}^{t+\tau} \int_\R \modulo{u-v} \gamma_r(x) dx \,dt + o\left(\frac{1}{r}\right).
	\end{align*}
	Observe that the second and the third terms on the right-hand side of the inequality tends to zero as $\tau\to 0$ following the same argument in \cite[Lemma B.1]{KarlsenRisebroTowers}, because our initial condition is satisfied in the ``weak" sense of the definition of our entropy condition. Sending $\tau\to 0$ and $r\to\infty,$ we have 
	\begin{align*}
		\frac{1}{k} \int_s^{s+k} \int_\R \modulo{u(t,x)-v(t,x)} dx \,dt \leq& \int_\R \modulo{u_0(x)-v_0(x)}dx\\
		&+K \int_0^{s+k} \int_\R  \modulo{u(t,x)-v(t,x)} dx \,dt.
	\end{align*}
	Sending $k\to0$ and an application of Gronwall's inequality give us the statement.
\end{proof}

\begin{lemma}[{\bf A Kru\v{z}kov-type integral inequality}]
	For any two entropy solutions $u=u(t,x)$ and $v=v(t,x)$
	the integral inequality \eqref{eq:inequality} holds for any $0\leq \phi\in \Cc\infty(\R^+\times \R\setminus\{0\}).$  
\end{lemma}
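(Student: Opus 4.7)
The plan is to adapt the classical Kru\v{z}kov doubling of variables technique to the non-local setting, exploiting the crucial fact that $\phi$ vanishes in a neighbourhood of $\{x=0\}$. I would write $\phi = \phi^+ + \phi^-$ with $\supp\phi^+ \subset \R^+\times\{x>0\}$ and $\supp\phi^-\subset \R^+\times\{x<0\}$; by linearity it suffices to establish \eqref{eq:inequality} for each of $\phi^+$ and $\phi^-$ separately, using entropy conditions 1) and 2) of Definition \ref{def:sol} respectively. I describe the argument for $\phi^+$; the other case is symmetric with $v_l$ in place of $v_r$.

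For the doubling, fix a standard symmetric mollifier $\rho_\alpha$ and, for small $\alpha>0$, define the doubled test function
\[
\phi_\alpha(t,x,s,y)=\Phi^+\!\left(\tfrac{t+s}{2},\tfrac{x+y}{2}\right)\rho_\alpha(t-s)\rho_\alpha(x-y),
\]
which is supported in $\{x>0\}\cap\{y>0\}$ provided $\alpha$ is small enough relative to $\mathrm{dist}(\supp\Phi^+,\{x=0\})$. I would apply entropy condition 1) to $u(t,x)$ with constant $\kappa=v(s,y)$ (integrating the result in $(s,y)$), and symmetrically to $v(s,y)$ with $\kappa=u(t,x)$, then sum and let $\alpha\to 0$. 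The time-derivative terms combine into $|u-v|\Phi^+_t$ in the standard way. The flux terms produce $|u-v|[(1-\omega_\eta*u)(t,x)\phi_{\alpha,x} + (1-\omega_\eta*v)(s,y)\phi_{\alpha,y}]v_r$; splitting via $\phi_{\alpha,x}+\phi_{\alpha,y}=\Phi^+_x\rho_\alpha\rho_\alpha$ and the antisymmetric part involving $\rho'_\alpha(x-y)$ gives, in the limit, a main contribution $|u-v|(1-\omega_\eta*u)v_r\Phi^+_x$ plus a remainder controlled by the uniform spatial Lipschitz regularity of $\omega_\eta*u$ and $\omega_\eta*v$ provided by Lemma \ref{lm:BVx}.

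To recognize the limit as \eqref{eq:inequality}, I would use the identity
\[
f(t,x,u)-f(t,x,v) = (u-v)(1-\omega_\eta*u)v_r - v\,\omega_\eta*(u-v)\,v_r \qquad\text{on } \{x>0\},
\]
so that the discrepancy between the desired $\sgn(u-v)(f(t,x,u)-f(t,x,v))\Phi^+_x$ and the doubled flux term is exactly $\sgn(u-v)v\,\omega_\eta*(u-v)\,v_r\Phi^+_x$. Combining this with the non-local source terms
\[
\sgn(u-v)v_r\bigl[u\,\partial_x(\omega_\eta*v) - v\,\partial_x(\omega_\eta*u)\bigr]\Phi^+ = \sgn(u-v)v_r\bigl[(u-v)\partial_x(\omega_\eta*v) - v\,\partial_x(\omega_\eta*(u-v))\bigr]\Phi^+
\]
inherited from the two entropy inequalities, one obtains the pointwise bound $C(|u-v|+|\omega_\eta*(u-v)|+|\partial_x(\omega_\eta*(u-v))|)\Phi^+$, with $C$ depending on $v_r$, $\|\rho_0\|_{L^1}$ and $\omega_\eta(0)$ via Lemmas \ref{lm:linfty}--\ref{lm:BVx}. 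The convolution terms are absorbed into a $K\iint|u-v|\phi$ bound using Fubini and a standard localization cutoff, producing the constant $K=K(T)$.

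The main obstacle is the passage to the limit $\alpha\to 0$ in the flux term, where, in contrast to the local Kru\v{z}kov setting, the spatial coefficient $(1-\omega_\eta*u)(t,x)$ from the $u$-inequality and $(1-\omega_\eta*v)(s,y)$ from the $v$-inequality differ even after $(s,y)\to(t,x)$. The cure is to split their difference $(1-\omega_\eta*v)(s,y)-(1-\omega_\eta*u)(t,x)$ into a part that is jointly Lipschitz in $(s,y)$ and a part equal to $\omega_\eta*(u-v)(t,x)$: when multiplied by $\rho'_\alpha(x-y)$ and integrated, the former gives an $o(1)$ contribution vanishing in the limit thanks to the uniform bound of Lemma \ref{lm:BVx}, while the latter produces the genuine remainder handled above. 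Throughout, the uniform $L^\infty$, $L^1$, and BV bounds on $u$ and on $\omega_\eta*u$ from Lemmas \ref{lm:linfty}--\ref{lm:BVx}, which survive the vanishing-viscosity limit, are essential for justifying all passages to the limit.
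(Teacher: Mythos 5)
Your proposal is correct and follows essentially the same route as the paper: a Kru\v{z}kov doubling of variables with a test function $\phi\bigl(\tfrac{t+s}{2},\tfrac{x+y}{2}\bigr)\delta_p\bigl(\tfrac{x-y}{2}\bigr)\delta_p\bigl(\tfrac{t-s}{2}\bigr)$ whose mollification parameter is kept smaller than the distance of $\supp\phi$ to $\{x=0\}$, followed by summing the two entropy inequalities with $\kappa=v(s,y)$ and $\kappa=u(t,x)$ and controlling the non-local remainders through the $L^\infty$, $L^1$ and Lipschitz bounds on $\omega_\eta*u$, $\omega_\eta*v$ and the estimate $\abs{\partial_x(\omega_\eta*u)-\partial_x(\omega_\eta*v)}\le\norm{\omega_\eta'}\norm{u-v}_{L^1}+\omega_\eta(0)(\abs{u-v}(t,x+\eta)+\abs{u-v}(t,x))$. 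The explicit splitting $\phi=\phi^++\phi^-$ is only a cosmetic difference from the paper's direct use of the discontinuous flux $f(t,x,\cdot)$, since the support assumption makes the two presentations equivalent.
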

\begin{proof}
	Let $0\leq\phi\in \Cc\infty\left((\R^+\times \R\setminus\{0\})^2\right),\: \phi=\phi(t,x,s,y), \: u=u(t,x)$ and $v=v(s,y).$
	From the definition of entropy solution for $u=u(t,x)$ with $\kappa=v(s,y)$ we get 
	\begin{align*}
		&-\iint_{\R^+\times \R} \left(\modulo{u-v}\phi_t+\sgn(u-v)\left(f(t,x,u)-f(t,x,v)\right)\phi_x\right)\, dt\, dx \\
		&+\iint_{\R^+\times \R\setminus\{0\}} \sgn(u-v) f(t,x,v)_x \phi \,dt\, dx \leq 0.
	\end{align*}
	Integrating over $(s,y)\in \R^+\times \R,$ we find
	\begin{equation} 
\label{eq:1ineq}\begin{split}
		&-\iiiint_{(\R^+\times \R)^2} \left(\modulo{u-v}\phi_t+\sgn(u-v)\left(f(t,x,u)-f(t,x,v)\right)\phi_x\right)\, dt \,dx \,ds\, dy\\ 
		&+\iiiint_{(\R^+\times \R\setminus\{0\})^2} \sgn(u-v) f(t,x,v)_x \phi\, dt\, dx\, ds\, dy\leq 0.
	\end{split}\end{equation}
	Similarly, for the entropy solution $v=v(s,y)$ with $\alpha(y)=u(t,x)$
\begin{equation} 
\label{eq:2ineq}\begin{split}
		&-\iiiint_{(\R^+\times \R)^2} \left(\modulo{v-u}\phi_s+\sgn(v-u)\left(f(s,y,v)-f(s,y,u)\right)\phi_x\right)\, dt\, dx\, ds\, dy\\ 
		&+\iiiint_{(\R^+\times \R\setminus\{0\})^2} \sgn(u-v) f(t,x,v)_x \phi\,dt\, dx\, ds\, dy \leq 0.
	\end{split}\end{equation}
	Note that we can write, for each $(t,x)\in \R^+\times \R\setminus\{0\},$
	\begin{align*}
		&\sgn(u-v)(f(t,x,u)-f(t,x,v))  \phi_x -\sgn(u-v) f(t,x,v)_x \phi\\
		&=\sgn(u-v)(f(t,x,u)-f(s,y,v))\phi_x-\sgn(u-v)\left[(f(t,x,v)-f(s,y,v))\phi\right]_x,
	\end{align*}
	so that
	\begin{align*}
		&-\iiiint_{(\R^+\times \R)^2}\sgn(u-v)(f(t,x,u)-f(t,x,v))  \phi_x \,dt\,dx\,ds\,dy \\
		&+\iiiint_{(\R^+\times \R\setminus\{0\})^2}\sgn(u-v) f(t,x,v)_x \phi \,dt\,dx\,ds\,dy\\
		=&-\iiiint_{(\R^+\times \R)^2}\sgn(u-v)(f(t,x,u)-f(s,y,v))\phi_x\,dt\,dx\,ds\,dy\\
		&+ \iiint_{(\R^+\times \R\setminus\{0\})^2} \sgn(u-v)\left[(f(t,x,v)-f(s,y,v))\phi\right]_x \,dt\,dx\,ds\,dy.
	\end{align*}
	Similarly, writing, for each $(y,s)\in \R^+\times \R\setminus\{0\}$
	\begin{align*}
		&\sgn(v-u)(f(s,y,v)-f(s,y,u))  \phi_y -\sgn(v- u) f(s,y,u)_y \phi\\
		&=\sgn(u-v)(f(s,y,v)-f(s,y,u))\phi_y-\sgn(u-v)\left[(f(t,x, u)-f(s,y, u))\phi\right]_x,
	\end{align*}
	so that
	\begin{align*}
		&-\iiiint_{(\R^+\times \R)^2}\sgn(u-v)(f(s,y,v)-f(s,y, u))  \phi_y \,dt\,dx\,ds\,dy \\
		&+\iiiint_{(\R^+\times \R\setminus\{0\})^2}\sgn(u-v) f(s,y, u)_y \phi \,dt\,dx\,ds\,dy\\
		=&-\iiiint_{(\R^+\times \R)^2}\sgn(u-v)(f(t,x,v)-f(s,y,u))\phi_x\,dt\,dx\,ds\,dy\\
		&\hspace{4em}+ \iiint_{(\R^+\times \R\setminus\{0\})^2} \sgn(u-v)\left[(f(t,x, u)-f(s,y, u))\phi\right]_y \,dt\,dx\,ds\,dy.
	\end{align*}
	Let us introduce the notations 
	\begin{align*}
		&\partial_{t+s}=\partial_t+\partial_s, \quad \partial_{x+y}=\partial_x+\partial_y,\\
		&\partial^2_{x+y}=(\partial_x+\partial_y)^2=\partial_x^2+2\partial_x\partial_y+\partial_y^2.
	\end{align*}
	Adding \eqref{eq:1ineq} and \eqref{eq:2ineq} we obtain 
	\begin{equation}
		\label{eq:A22}\begin{split}
		&-\iiiint_{(\R^+\times \R)^2} \left(\modulo{u-v}\partial_{t+s}\phi+\sgn(u-v)\left(f(t,x,u)-f(s,y,v)\right)\partial_{x+y} \phi\right)   \,dt\,dx\,ds\,dy\\ 
		&+\iiiint_{\R^+\times \R\setminus\{0\}} \sgn(u-v) \left(\partial_x\left[ (f(t,x,v)-f(s,y,v)) \phi \right] 
		\right.\\  
		&\hspace{6em}\left.+ \partial_y \left[(f(t,x,u)-f(s,y,u)) \phi \right]\right)  \,dt\,dx\,ds\,dy \leq 0. 
	\end{split}\end{equation}
	We introduce a non-negative function $\delta\in \Cc\infty(\R),$ satisfying $\delta(\sigma)=\delta(-\sigma),\, \delta(\sigma)=0$ for $\modulo{\sigma}\geq 1,$ and $\int_{\R} \delta(\sigma) d \sigma=1.$ For $u>0$ and $z\in\R,$ let $\delta_{p}(z)=\frac{1}{p}\delta(\frac{z}{p}).$ We take our test function $\phi=\phi(t,x,s,y)$ to be of the form 
	\begin{equation*}
		\Phi(t,x,s,y)= \phi\left(\frac{t+s}{2},\frac{x+y}{2}\right)\delta_p\left(\frac{x-y}{2}\right) \delta_p\left(\frac{t-s}{2} \right),
	\end{equation*}
	where $0\leq \phi\in \Cc\infty\left(\R^+\times \R\setminus{0}\right)$ satisfies  
	\begin{equation*}
		\phi(t,x)=0, \quad \forall(t,x)\in[-h,h]\times[0,T],
	\end{equation*}
	for small $h>0.$ By making sure that 
	\begin{equation*}
		p<h,
	\end{equation*}
	one can check that $\Phi$ belongs to $\Cc\infty\left(\left(\R^+\times \R\setminus\{0\}\right)^2\right).$
	We have 
	\begin{align*}
		\partial_{t+s} \Phi(t,x,s,y)= 	\partial_{t+s} \phi\left( \frac{t+s}{2},\frac{x+y}{2}\right)\delta_p\left(\frac{x-y}{2}\right) \delta_p\left(\frac{t-s}{2} \right),\\
		\partial_{x+y} \Phi(t,x,s,y)= 	\partial_{x+y} \phi\left(\frac{t+s}{2},\frac{x+y}{2}\right)\delta_p\left(\frac{x-y}{2}\right) \delta_p\left(\frac{t-s}{2} \right),\\
		\end{align*}
	Using $\Phi$ as test function in \eqref{eq:A22} 
	\begin{align*}
		&-\iiiint_{(\R^+\times \R)^2} \left(I_1(t,x,s,y)+I_2(t,x,s,y)\right)\delta_p\left(\frac{x-y}{2}\right) \delta_p\left(\frac{t-s}{2} \right) \,dt\,dx\,ds\,dy\\
		&\leq \iiiint_{(\R^+\times \R\setminus\{0\})^2} \left(I_3(t,x,s,y)+I_4(t,x,s,y)+I_5(t,x,s,y)\right) \,dt\,dx\,ds\,dy,
	\end{align*}
	where 
	\begin{align*}
		I_1&=\modulo{u(t,x)-v(s,y)} \partial_{t+s} \phi\left(\frac{t+s}{2},\frac{x+y}{2}\right),\\
		I_2&=\sgn{(u(t,x)-v(s,y))} (f(t,x,u)-f(s,y,v)) \partial_{x+y} \phi\left(\frac{t+s}{2}, \frac{x+y}{2}\right),\\
		I_3&=-\sgn{(u(t,x)-v(s,y))} \left(\partial_x f(t,x,v) -\partial_y f(s,y,u)\right) \\
		&\hspace{4em}\phi\left(\frac{t+s}{2},\frac{x+y}{2},\right)\delta_p\left(\frac{x-y}{2}\right) \delta_p\left(\frac{t-s}{2} \right),\\
		I_4&=-\sgn{(u(t,x)-v(s,y))} \delta_p\left(\frac{x-y}{2}\right) \delta_p\left(\frac{t-s}{2} \right)\\
&\hspace{4em}		\left[ \partial_x\phi\left(\frac{t+s}{2},\frac{x+y}{2},\right) (f(t,x,v) -f(s,y, v))\right.\\
&\hspace{4em}	\hspace{4em}		\left.		 \partial_y\phi\left(\frac{t+s}{2},\frac{x+y}{2}\right) (f(t,x, u) -f(s,y,u))\right],\\
		I_5&=\left(F(x,u(t,x),v(s,y))-F(y,u(t,x),v(s,y))\right) \\
		&\hspace{4em}\phi\left(\frac{t+s}{2},\frac{x+y}{2}\right)\partial_x\delta_p\left(\frac{x-y}{2}\right) \delta_p\left(\frac{t-s}{2} \right),
	\end{align*}
where $F(x,u,c):=\sgn{(u-c)}\left(f(t,x,u)-f(t,x,c)\right).$\\
	We now use the change of variables 
	\begin{align*}
		\tilde x =\frac{x+y}{2}, \quad 	\tilde t =\frac{t+s}{2}, \quad z =\frac{x-y}{2},\quad \tau =\frac{t-s}{2},
	\end{align*}
	which maps $(\R^+\times \R)^2$ in $\Omega\subset \R^4$  and $(\R^+\times \R\setminus\{0\})^2$ in $\Omega_0\subset \R^4,$ where
	\begin{align*}
	&	\Omega=\{(\tilde x, \tilde t,z, \tau)\in\R^4 :\,0<\tilde t \pm \tau<T\}, \\
     & \Omega_0=\{(\tilde x, \tilde t,z, \tau)\in\Omega : \tilde x \pm z\neq 0\},
	\end{align*}
	resepectively. 
	With this changes of variables, 
	\begin{align*}
		&\partial_{t+s} \phi\left(\frac{t+s}{2},\frac{x+y}{2}\right)=\partial_{\tilde t} \phi(\tilde t, \tilde x),\\
		&\partial_{x+y} \phi\left(\frac{t+s}{2},\frac{x+y}{2}\right)=\partial_{\tilde x} \phi(\tilde t, \tilde x).	\end{align*}
	Now we can write 
	\begin{align*}
		&-\iiiint_{\Omega} \left(I_1(\tilde t, \tilde x,\tau,z)+I_2(\tilde t, \tilde x,\tau,z)\right)\delta_p\left(z\right) \delta_p\left(\tau \right) \,d\tilde t\,d \tilde x\,d\tau\,dz\\
		&\hspace{4em}\leq \iiiint_{\Omega_0} \left(I_3(\tilde t, \tilde x,\tau,z)+I_4(\tilde t, \tilde x,\tau,z)+I_5(\tilde t, \tilde x,\tau,z)\right) \,d\tilde t\,d \tilde x\,d\tau\,dz,
	\end{align*}
	where
	\begin{align*}
		I_1(\tilde t, \tilde x,\tau,z)=&\modulo{u(\tilde t+\tau,\tilde x+z)-v(\tilde t-\tau,\tilde x-z)} \partial_{\tilde t} \phi\left(\tilde t,\tilde x\right),\\
		I_2(\tilde t, \tilde x,\tau,z)=&\sgn{(u(\tilde t+\tau,\tilde x+z)-v(\tilde t-\tau,\tilde x-z))}\\
		& (f(\tilde t+\tau,\tilde x+z,u)-f(\tilde t-\tau,\tilde x-z,v)) \partial_{\tilde x} \phi\left(\tilde t,\tilde x\right),\\
		I_3(\tilde t, \tilde x,\tau,z)=&-\sgn{(u(\tilde t+\tau,\tilde x+z)-v(\tilde t-\tau,\tilde x-z))} \\
		&\left(\partial_{\tilde x+z} f(\tilde t+\tau,\tilde x+z,v) -\partial_{\tilde x -z} f(\tilde t-\tau,\tilde x-z,u)\right)
		\phi\left(\tilde t,\tilde x\right)\delta_p\left(z\right) \delta_p\left(\tau \right),\\
		I_4(\tilde t, \tilde x,\tau,z)=&-\sgn{(u(\tilde t+\tau,\tilde x+z)-v(\tilde t-\tau,\tilde x-z))}\\
		&  \partial_{\tilde x}\phi\left(\tilde t,\tilde x\right) \delta_p\left(z\right) \delta_p\left(\tau \right)
		\left[ (f(\tilde t+\tau,\tilde x+z,v) -f(\tilde t-\tau,\tilde x-z, v))\right.\\ 
		&\hspace{8em}\left.+(f(\tilde t+\tau,\tilde x+z, u) -f(\tilde t-\tau,\tilde x-z,u))\right],\\
		I_5(\tilde t, \tilde x,\tau,z)=&\left(F(\tilde x+z,u(\tilde t+\tau,\tilde x+z),v(\tilde t-\tau,\tilde x-z))\right.\\
		&\left.-F( \tilde x-z,u(\tilde t+\tau,\tilde x+z),v(\tilde t-\tau,\tilde x-z))\right)
		\phi\left(\tilde t, \tilde x\right) \partial_z\delta_p\left(z\right) \delta_p\left(\tau \right).
	\end{align*}
	Employing Lebesgue's differentiation theorem, to obtain the following limits
	\begin{align*}
		&\lim_{p\to 0} \iiiint_{\Omega} I_1(\tilde t, \tilde x,\tau, z) \delta_{p}(z)\delta_{p}(\tau)\, d\tilde t\, d \tilde x\, d\tau\, dz\\
		&\hspace{7em}= \iint_{\R^+\times \R} \modulo{u(t,x)-v(t,x)} \partial_t \phi(t,x) dt dx,\\
		&\lim_{p\to 0} \iiiint_{\Omega} I_2(\tilde t, \tilde x, \tau, z) \delta_{p}(z)\delta_{p}(\tau)\, d\tilde t\, d \tilde x\, d\tau\, dz\\
		&\hspace{7em}= \iint_{\R^+\times \R} \sgn{(u(t,x)-v(t,x))(f(t,x,u)-f(t,x,v))} \partial_x \phi(t,x) dt dx.	\end{align*}
	Let us consider the term $I_3.$ Note that $I_3(\tilde t, \tilde x,\tau, z)=0$, if $\tilde x\in[-h,h],$ since then $\phi(\tilde t,\tilde x)=0$ for any $\tilde t,$ or if $\modulo{z}\geq p.$ On the other hand, if $\tilde x \not\in [-h,h],$ then $\tilde x\pm z<0$ or $\tilde x\pm z>0,$ at least when $\modulo{z}<p$ and $p<h.$
	Defining $U(t,x)=1-\omega_\eta\ast u$ and $V(t,x)=1-\omega_\eta\ast v,$  and sending $p\to0:$
	\begin{align*}
	&\lim_{p\to 0} \iiiint_{\Omega_0} I_3(\tilde t, \tilde x,  \tau, z) \, d\tilde t\, d \tilde x\, d\tau\, dz\\
	&=\iint_{\R^+\times \R\setminus\{0\}} \sgn{(u(t,x)-v(t,x))} \mathfrak{v}(x) \left(v \partial_x V- u \partial_x U\right) \phi\left(t,x\right) \, dt \, dx\\
	&\leq v_r\norma{\partial_x V} \iint_{\R^+\times \R\setminus\{0\}} \modulo{u-v} \phi(t,x)\, dt \, dx + v_r \iint_{\R^+\times \R\setminus\{0\}}\modulo{\rho} \modulo{\partial_x V- \partial_x U}\, dt \, dx\\
	&\leq  K_1 \iint_{\R^+\times \R\setminus\{0\}} \modulo{u-v} \phi(t,x)\, dt \, dx. 
		\end{align*}
	In fact, 
	\begin{align*}
	\modulo{\partial_{x}V-\partial_x U}\leq& \norma{\omega'_\eta}\,  \,\norma{u(t,\cdot)-v(t,\cdot)}_{L^1}\\&+\omega_\eta(0)
	\left( \modulo{u- v}(t,x+\eta)+ \modulo{u-v}(t,x) \right).
	 \end{align*}
	 	The term $I_4$ converges to zero as $p\to0.$
	 	Finally, the term $I_5$ 
	 	\begin{equation*}
	 	\lim_{p\to 0} \iiiint_{\Omega_0} I_5(\tilde t, \tilde x, \tau, z) \, d\tilde t\, d \tilde x\, d\tau\, dz\leq  K_2 \iint_{\R^+\times \R\setminus\{0\}} \modulo{u-v} \phi(t,x)\, dt \, dx. 
	 	\end{equation*}
 	\end{proof}

\bibliographystyle{abbrv}

\end{document}